\newtheorem{theorem}{Theorem}[section]
\newtheorem{lemma}[theorem]{Lemma}
\newtheorem{proposition}[theorem]{Proposition}
\newtheorem{problem}[theorem]{Problem}
\newtheorem{corollary}[theorem]{Corollary}
\theoremstyle{definition}
\newtheorem{definition}[theorem]{Definition}
\newtheorem{xca}[theorem]{Construction}
\newtheorem{axiom}[theorem]{Axiom}
\theoremstyle{remark}
\numberwithin{equation}{section}
\begin{document}

\title[Boolean algebras and s.p.\ measures]{On Boolean algebras with strictly positive measures}

\author[M.\ Magidor]{Menachem Magidor}
\address{Hebrew University of Jerusalem}
\email{mensara@savion.huji.ac.il}

\author[G.\ Plebanek]{Grzegorz Plebanek}
\address{Instytut Matematyczny, Uniwersytet Wroc\l awski}
\email{grzes@math.uni.wroc.pl}

\thanks{
This research started during the conference {\em Set-theoretic methods in topology and analysis} (B\c{e}dlewo, September 2017) organized on the occasion of Professor Aleksander
B{\l}aszczyk's 70th birthday.
The second author is grateful to Boban Veli\v{c}kovi\'c for a valuable discussion on the subject  during
{\em The 6th European Set Theory Conference} (Budapest, July 2017) and for bringing \cite{FV06} to our attention.  }

\newcommand{\spm}{\protect{\sf SPM}}
\newcommand{\con}{\mathfrak c}

\subjclass[2010]{Primary 28A60, 06E15, 03G05.}
\keywords{Stationary set, reflection principle, Boolean algebra, strictly positive measure.}
\newcommand{\fA}{\mathfrak A}
\begin{abstract}
We investigate reflection-type  problems on the class $\spm$, of Boolean algebras carrying strictly positive finitely additive measures.
We show, in particular, that in the constructible universe there is a Boolean algebra $\fA$ which is not  in $\spm$
but every subalgebra of $\fA$ of cardinality $\con$ admits a strictly positive measure.
This result is essentially due to Farah and Veli\v{c}kovi\'c \cite{FV06}.
\end{abstract}

\maketitle


\newcommand{\eps}{\varepsilon}

\newcommand{\alg}{\mathfrak A}
\newcommand{\algb}{\mathfrak B}
\newcommand{\algc}{\mathfrak C}
\newcommand{\fB}{\mathfrak B}
\newcommand{\fC}{\mathfrak C}
\newcommand{\cP}{\mathcal P}
\newcommand{\ma}{\mathfrak M}
\newcommand{\pa}{\mathfrak P}
\newcommand{\BB}{\protect{\mathcal B}}
\newcommand{\AAA}{\mathcal A}
\newcommand{\CC}{{\mathcal C}}
\newcommand{\cE}{{\mathcal E}}
\newcommand{\cF}{{\mathcal F}}
\newcommand{\FF}{{\mathcal F}}
\newcommand{\GG}{{\mathcal G}}
\newcommand{\LL}{{\mathcal L}}
\newcommand{\NN}{{\mathcal N}}
\newcommand{\UU}{{\mathcal U}}
\newcommand{\VV}{{\mathcal V}}
\newcommand{\HH}{{\mathcal H}}
\newcommand{\DD}{{\mathcal D}}
\newcommand{\RR}{\protect{\mathcal R}}
\newcommand{\ide}{\mathcal N}
\newcommand{\btu}{\bigtriangleup}
\newcommand{\hra}{\hookrightarrow}
\newcommand{\ve}{\vee}
\newcommand{\we}{\cdot}
\newcommand{\de}{\protect{\rm{\; d}}}
\newcommand{\er}{\mathbb R}
\newcommand{\qu}{\mathbb Q}
\newcommand{\supp}{{\rm supp} }
\newcommand{\card}{{\rm card} }
\newcommand{\wn}{{\rm int} }
\newcommand{\ult}{{\rm ult}}
\newcommand{\vf}{\varphi}
\newcommand{\osc}{{\rm osc}}
\newcommand{\cov}{{\rm cov}}
\newcommand{\cf}{{\rm cf}}
\newcommand{\ol}{\overline}
\newcommand{\me}{\protect{\bf v}}
\newcommand{\ex}{\protect{\bf x}}
\newcommand{\stevo}{Todor\v{c}evi\'c}
\newcommand{\cc}{\protect{\mathfrak C}}
\newcommand{\scc}{\protect{\mathfrak C^*}}
\newcommand{\lra}{\longrightarrow}
\newcommand{\sm}{\setminus}
\newcommand{\uhr}{\upharpoonright}

\newcommand{\sub}{\subseteq}
\newcommand{\ms}{$(M^*)$}
\newcommand{\m}{$(M)$}
\newcommand{\MA}{$\protect{{\mathsf MA}}(\omega_1)$}
\newcommand{\clop}{\protect{\rm Clop} }
\newcommand{\fX}{\mathfrak X}
\newcommand{\fY}{\mathfrak Y}
\newcommand{\fZ}{\mathfrak Z}
\newcommand{\fr}{\kappa}

\newcommand{\spms}{\sf SPM^*}
\newcommand{\nmam}{NMA$^-$}
\section{Introduction}


Given a Boolean algebra $\fA$, we write $\fA\in\spm$ to denote that $\fA$ carries a strictly positive measure, that is, there is
a finitely additive function $\mu:\fA\to\er_+$ such that $\mu(a)>0$ for every  $a\in\fA^+=\fA\sm\{0\}$.

It is easy to check that every $\sigma$-centred algebra $\fA$ is in \spm.
Let us recall that there is a combinatorial characterization of algebras from the class \spm\ due to Kelley \cite{Ke59}. Namely,
$\fA\in\spm$ if and only if there is a decomposition $\fA^+=\bigcup_n \cE_n$, where every family $\cE_n$ has the positive intersection number.
By definition, the intersection number of $\cE\sub\fA$ is $\ge \eps$ if for every $n$,  every sequence $a_1,\ldots, a_n\in\cE$ contains a subsequence of length $\ge\eps\cdot n$
with nonzero joint; cf.\ \cite{MN80} and \cite{To00}.

We consider here the problems of the following type.

\begin{problem}\label{i:1}
Let $\fr$ be a cardinal number.
Suppose that a Boolean  algebra $\fA$ has the property that $\fB\in\spm$ for every subalgebra $\fB$ of $\fA$ of cardinality $<\fr$.
Must $\fA$ itself be in \spm?
\end{problem}

The answer to the above question is clearly negative for $\fr=\omega_1$ since every countable algebra is in \spm. Note also
that Problem \ref{i:1} has a negative consistent answer for $\fr=\omega_2$. Indeed, assume that $\con=\omega_2$ and that Martin's axiom MA$(\omega_1)$ holds;
 let $\fA$ be the Gaifman algebra \cite{Ga64}, that is, $\fA$ is a $ccc$ algebra not carrying a strictly positive measures. Then every subalgebra $\fB$ of $\fA$ of size $\le\omega_1$ is
$\sigma$-centred by  Martin's axiom.

We shall discuss the above problem for $\kappa=\con^+$, which seems to be the most natural question.
It turns out, that   the positive answer to Problem \ref{i:1},  even for $\kappa=\con$, is a consequence of the Normal Measure Axiom, see Section \ref{lc}.
This may be seen by a simple adaptation of an argument due to Fremlin \cite{Fr93}.
On the other hand,  \ref{i:1} has a negative answer for $\kappa=\con^+$
in the constructible universe: in Section we construct a Boolean algebra $\fA\notin\spm$ such that $|\fA|=\con^+$ and
$\fB\in\spm$ for every subalgebra $\fB$ of $\fA$ with $|\fB|\le\con$.
Our proof  is in fact  a variant of an argument leading to the main result from \cite{FV06}.

 The construction mentioned above is based on the existence of a stationary set $S$ in $\omega_2$, consisting of ordinals of countable cofinality, that does not reflect, i.e.\ $S\cap \xi$ is stationary in $\xi$ for no limit ordinal $\xi<\omega_2$.
Various aspects of (non)reflecting stationary sets were discussed in  \cite{Ma82} and \cite{BMS} and found several applications in topology and functional analysis, see e.g.\ \cite{Fl80}, \cite{Ta07}  and \cite{MP17}.

In the light of our results on  Problem \ref{i:1} presented in this note, the following question seems to be quite interesting.

\begin{problem}
Is it  consistent with GCH that every algebra $\fA\notin\spm$ contains a subalgebra $\fB\notin\spm$ of cardinality $\le\con$?
\end{problem}

\section{Assuming large cardinals}\label{lc}

We discuss here an essentially known  partial solution to Problem \ref{i:1} with $\fr=\con$.

\begin{axiom}\label{lc:1}
We write \nmam\  for the following assertion:
\medskip

{\em For every set $X$ there is a countably additive probability measure defined on $\lambda$ on $\cP([X]^{<\con})$ such that
\[\mu\left( \{A\in [X]^{<\con}: x\in A\}\right)=1 \mbox{ for every } x\in X.\]}
 \end{axiom}

Note that the assertion of \nmam\  holds trivially for all sets $X$ with $|X|<\con$.
\nmam\ is formally weaker than NMA, the normal measure axiom, introduced by Fleissner \cite{Fl91}. The full version of NMA requires that the measure $\mu$ in question
is $\con$-additive and normal. Recall that, by a result due to Prikry,  the consistency of NMA is implied by the existence of a supercompact cardinal, see \cite{Fl91}.

The proof of the following theorem is a straightforward  adaptation of the argument from  Fremlin \cite[8R]{Fr93}.

\begin{theorem}\label{lc:2}
Assume \nmam.  Suppose that  $\fA$ is a Boolean algebra such that $\fB\in\spm$ for every subalgebra $\fB$ of $\fA$ with $|\fB|<\con$.
Then $\fA\in\spm$.
\end{theorem}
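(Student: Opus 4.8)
The plan is to use the Kelley criterion quoted in the introduction: it suffices to produce a countable decomposition $\fA^+=\bigcup_n\cE_n$ where each $\cE_n$ has positive intersection number. The idea is to manufacture the sets $\cE_n$ by integrating, against the normal-type measure $\mu$ supplied by \nmam, the witnessing measures that exist on small subalgebras. So first I would set $X=\fA$ and let $\mu$ be the measure on $\cP([\fA]^{<\con})$ given by \nmam. For each $A\in[\fA]^{<\con}$, let $\fB_A$ be the subalgebra generated by $A$; since $|\fB_A|<\con$ (as $\con$ is regular, or one can close $A$ off first), the hypothesis gives a strictly positive finitely additive probability $\nu_A$ on $\fB_A$, which we extend arbitrarily to a finitely additive probability on all of $\fA$ and still denote $\nu_A$.

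Next I would define $\lambda:\fA\to[0,1]$ by $\lambda(a)=\int_{[\fA]^{<\con}}\nu_A(a)\,\de\mu(A)$, checking that the integrand $A\mapsto\nu_A(a)$ is $\mu$-measurable (here one typically restricts attention to sets $A$ containing some fixed countable framework, or observes that for the relevant computations only finitely many coordinates matter). Finite additivity of $\lambda$ follows from finite additivity of each $\nu_A$ and linearity of the integral. The key point is strict positivity: fix $a\in\fA^+$. By \nmam\ the set $\{A: a\in A\}$ has $\mu$-measure $1$, and for every such $A$ we have $a\in\fB_A$, hence $\nu_A(a)>0$. This shows $\lambda(a)=\int\nu_A(a)\,\de\mu>0$, because the integrand is strictly positive on a set of full measure. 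Thus $\lambda$ is a strictly positive finitely additive measure on $\fA$, i.e.\ $\fA\in\spm$.

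The routine-but-necessary technical step is the measurability of $A\mapsto\nu_A(a)$, which is what forces one to be slightly careful about how the $\nu_A$ are chosen (one wants some coherence, or at least a measurable selection); this is where the adaptation of \cite[8R]{Fr93} does its real work, and it is the step I expect to be the main obstacle. Once measurability is in hand, a slicker route avoiding Kelley's criterion altogether is to define $\lambda$ directly by the integral above and be done; alternatively, to stay closer to the combinatorial picture, one can set $\cE_n=\{a\in\fA^+:\lambda(a)\ge 1/n\}$, observe $\fA^+=\bigcup_n\cE_n$ by strict positivity, and note that $\lambda$ witnesses that each $\cE_n$ has intersection number $\ge 1/n$. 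Either way the heart of the matter is the integration against the \nmam-measure, using full-measure-many small subalgebras to localise any given nonzero element.
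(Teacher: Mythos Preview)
Your proposal is correct and follows exactly the paper's argument: integrate the small-subalgebra measures against the \nmam\ measure to produce a strictly positive finitely additive measure on $\fA$ directly (the paper does not bother with Kelley's criterion at all). Your measurability worry is misplaced, however --- the \nmam\ measure lives on the full power set $\cP([\fA]^{<\con})$, so every $[0,1]$-valued function is automatically measurable; the paper accordingly says nothing about measurability, and the only genuine point of care is that \emph{countable} additivity of the \nmam\ measure is what ensures an almost-everywhere strictly positive integrand has strictly positive integral.
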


\begin{proof}
We use \nmam to find a countably additive probability measure $\lambda$ on $\cP([\fA]^{<\con})$ such that
$\lambda(\{B\in [\fA]^{<\con}: a\in B\})=1$ for every $a\in\fA$.

Given $B\sub\fA$ with $|B|<\con$, the subalgebra $\fB\sub\fA$ generated by $B$ is also of cardinality $<\con$. By the assumption on $\fA$,
there is a strictly positive finitely additive measure $\mu_B$ on $\fB$.
We define $\mu$ on $\fA$ by the formula
\[\mu(a)=\int_{[\fA]^{<\con}}\mu_B(a)\; {\rm d}\lambda(B).\]
Note that, for every $a$ in $\fA$, we have $a\in B$ for $\lambda$-almost all $B$; hence, the above integral is well-defined.
Likewise, for disjoint $a_1,a_2\in\fA$ we have $\mu_B(a_1\vee a_2)=\mu_B(a_1)+\mu_B(a_2)$ $\lambda$-almost everywhere.
Therefore, by the linearity of the integral, $\mu(a_1\vee a_2)=\mu(a_1)+\mu(a_2)$, so $\mu$ is a finitely additive probability measure
on $\fA$.

Finally, if  $a\in\fA^+$ then the function $B\to \mu_B(a)$ is positive  almost everywhere. Consequently, the integral of  such a function with respect to
a countably additive measure is positive.
\end{proof}

A compact (Hausdorff) space $K$ carries a strictly positive measure if there is a regular Borel probability measure $\mu$ such that
$\mu(V)>0$ for every nonempty open set $V\sub K$. Let us write $\spms$ for the class of compacta admitting a strictly positive measure.

Note that for a compact zero-dimensional space $K$, $K\in\spms$ if and only if the algebra $\clop(K)$, of closed-and-open subsets of $K$, is in $\spm$. Indeed, if $\mu$ is a Borel measure on $K$ then the restriction of $\mu$ to $\clop(K)$  is strictly positive (finitely additive) measure on a Boolean algebra. Conversely, given finitely additive strictly positive $\mu$ on $\clop(K)$, there is a unique extension $\widetilde{\mu}$ of $\mu$ to a regular measure on $Bor(K)$; clearly $\widetilde{ \mu}(V)>0$ for every nonempty open set.

The class $\spms$ is discussed in \cite[Chapter 6]{CN82}.  Recall that $\spms$ contains all metrizable compacta, is closed under taking arbitrary products and continuous images. We show below that Theorem \ref{lc:2} yields a reflection-type result for the class
$\spms$, which is in the spirit of properties considered in Tkachuk \cite{Tk12} and Tkachenko \& Tkachuk \cite{TT15}.

Recall first that if $g:K\to L$ is a continuous mapping between topological spaces and $\mu$ is a Borel measure on $K$, then
the image measure $g[\mu]$ is a Borel measure on $L$ defined by the formula $g[\mu](B)=\mu(g^{-1}[B])$ for  $B\in Bor(L)$.
It is well-known that, in the case $K$ is compact and $g$ is surjective, for every Borel measure $\nu$ on $L$ there is
a Borel measure $\mu$ on $K$ such that $g[\mu]=\nu$.

Below we denote by $w(\cdot )$ the weight of a topological space.

\begin{theorem}\label{lc:3}
Assume \nmam. Suppose that $K$ is a compact space such that $L\in\spms$ for every continuous image $L$ of $K$ with $w(L)<\con$. Then  $K\in \spms$.
\end{theorem}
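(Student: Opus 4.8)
The goal is to deduce the topological reflection statement (Theorem~\ref{lc:3}) from the Boolean-algebraic one (Theorem~\ref{lc:2}), via a Stone-duality / clopen-algebra argument. The plan is to pass from $K$ to a zero-dimensional space on which the clopen algebra captures all the relevant information, apply Theorem~\ref{lc:2} there, and then transport the resulting strictly positive measure back to $K$ using the surjectivity fact about image measures recalled just above the statement.

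**Step 1: reduce to the zero-dimensional case.**

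First I would not assume $K$ itself is zero-dimensional. Instead, consider a family $\cF$ of continuous real-valued functions on $K$ that separates points (e.g.\ all of $C(K)$, or a dense subfamily); this realizes $K$ as a subspace of a cube $[0,1]^\Gamma$, and composing coordinate maps with the standard dyadic-interval clopen generators exhibits $K$ as a continuous image — indeed an inverse limit — of a zero-dimensional compactum $K'$, namely the Stone space of the Boolean algebra $\alg$ generated by the preimages $f^{-1}[U]$ for $f\in\cF$ and $U$ ranging over a countable base of $[0,1]$ consisting of sets with boundary of measure zero (or simply work with the algebra of clopen sets of a suitable zero-dimensional resolution). The key point: the quotient map $g\colon K'\to K$ is continuous surjective, and by the recalled fact every Borel probability measure on $K$ is the image $g[\mu]$ of some Borel probability $\mu$ on $K'$; conversely $g[\mu]$ strictly positive on $K$ whenever $\mu$ is strictly positive on $K'$, because $g$ is onto and open sets pull back to nonempty open sets. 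So it suffices to prove $K'\in\spms$, i.e.\ $\clop(K')\in\spm$. Alternatively — and this is cleaner — one can bypass $K'$ and work directly with the algebra $\alg = \clop(\beta)$ for $\beta$ a zero-dimensional cover, but the essential content is the same.

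**Step 2: apply Theorem~\ref{lc:2} to $\clop(K')$.**

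Let $\alg=\clop(K')$. To invoke Theorem~\ref{lc:2} I must check: every subalgebra $\fB\sub\alg$ with $|\fB|<\con$ lies in $\spm$. Given such $\fB$, form its Stone space $L_\fB$; this is a zero-dimensional compactum with $\clop(L_\fB)\cong\fB$, hence $w(L_\fB)=|\fB|<\con$. There is a natural continuous surjection $K'\twoheadrightarrow L_\fB$ dual to the inclusion $\fB\hra\alg$, and composing with $g$ gives a continuous surjection $K\twoheadrightarrow L_\fB$, so $L_\fB$ is a continuous image of $K$ of weight $<\con$. By hypothesis $L_\fB\in\spms$, hence (zero-dimensional case, as noted in the excerpt) $\fB\cong\clop(L_\fB)\in\spm$. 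Thus $\alg$ satisfies the hypothesis of Theorem~\ref{lc:2}, so $\alg\in\spm$, so $K'\in\spms$, and finally $K\in\spms$ by Step~1.

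**The main obstacle.**

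The routine parts are the Stone-duality bookkeeping. The one genuinely delicate point is Step~1: ensuring that a general compact $K$ (not assumed zero-dimensional) is a continuous image of a zero-dimensional compactum in a way compatible with the weight bookkeeping, and that strict positivity transfers correctly through $g$. Surjectivity of $g$ gives $g^{-1}[V]\neq\emptyset$ open for $V\neq\emptyset$ open, so $g[\mu](V)=\mu(g^{-1}[V])>0$ — that direction is fine. The subtlety is in Step~2's weight count: a subalgebra of $\clop(K')$ of size $<\con$ must yield a continuous image of $K$ (not merely of $K'$) of weight $<\con$; this works because continuous images compose, but one should be careful that $w(L_\fB)=|\clop(L_\fB)|$ for zero-dimensional compacta, which holds when $|\fB|$ is infinite. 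Small finite cases are trivial since then $L_\fB$ is finite, hence metrizable, hence in $\spms$. So the argument goes through, with Step~1 — the zero-dimensional reduction — being where care is actually required.
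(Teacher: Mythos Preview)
Your Step~2 contains a genuine gap. You have the cover $g\colon K'\to K$ and, for a small subalgebra $\fB\sub\clop(K')$, the Stone surjection $K'\twoheadrightarrow L_\fB$. You then assert that ``composing with $g$ gives a continuous surjection $K\twoheadrightarrow L_\fB$,'' but this is not a composition: both maps have domain $K'$, and in general there is no map $K\to L_\fB$ at all. Continuous images of your zero-dimensional cover $K'$ form a strictly larger class than continuous images of $K$ --- for instance, if $K$ is connected then no nontrivial totally disconnected space is an image of $K$, whereas $K'$ has plenty of such images. So the hypothesis, which speaks only of images of $K$, tells you nothing about $L_\fB$, and the verification that $\fB\in\spm$ collapses.

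The paper closes this gap by choosing the cover to be the Gleason space $G$ of $K$, so that $r\colon G\to K$ is \emph{irreducible}. Irreducibility is precisely what lets clopen sets in $G$ descend to $K$: for each nonempty clopen $B\sub G$ the set $r[G\sm B]$ is a proper closed subset of $K$, so Urysohn yields a nonzero continuous $f_B\colon K\to[0,1]$ vanishing on it. The diagonal of the $f_B$ (for $B\in\fB$) is then a map out of $K$ --- not out of $G$ --- into $[0,1]^{\fB}$, producing a continuous image $L$ of $K$ with $w(L)\le|\fB|<\con$. The paper runs this by contraposition (assume $K\notin\spms$, so $G\notin\spms$, so $\clop(G)\notin\spm$; extract $\fB\notin\spm$ via Theorem~\ref{lc:2}; then verify directly that the resulting $L\notin\spms$), but the same irreducibility device would equally well repair your direct argument.
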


\begin{proof}
Let us consider the Gleason space $G$ of $K$, i.e.\  an essentially unique extremally  disconnected compact space $G$ which
can be mapped onto $K$ by an irreducible mapping $r:G\to K$.

Suppose that $K\notin\spms$; then $G$ is not in $\spms$ either, see Corollary  6.3 in \cite{CN82}.
The space $G$ is zero-dimensional  so $\fA=\clop(G)\notin\spm$. By Theorem \ref{lc:2}, $\fA$ must contain a subalgebra
$\fB$ of size $<\con$ such that $\fB\notin\spm$.

For every $B\in\fB^+$, the set $r[G\sm B]$ is a proper closed subset of $K$
(since $r$ is irreducible). Pick a continuous
non-zero function  $f_B:K\to [0,1]$ which vanishes on  $r[G\sm B]$.
Let $g:K\to [0,1]^\fB$ be the diagonal map defined by $g(x)(B)=f_B(x)$ for $x\in K$ and $B\in\fB$.

Note that for the space $L=g[K]$, we have
\[ w(L)\le w([0,1]^\fB)=|\fB|<\con,\]
so to complete the proof it is enough to check  that $L\notin\spms$.

Indeed, take any probability Borel measure $\nu$ on $L$. Then, by the fact mentioned prior to the theorem,
there is a Borel probability measure $\mu$ on $G$, such that $g\circ r[\mu]=\nu$. By the choice of $\fB$,
$\mu(B)=0$ for some $B\in\fB^+$.
 Put $U=\{y\in L: y(B)>0\}$; then $U$ is open in $L$ and nonempty,
 for taking $x\in K$ such that $f_B(x)>0$ we get $y=g(x)\in U$.
Now  \[(g\circ r)^{-1}(U)=r^{-1}\left[\{x\in K: f_B(x)>0\}\right] \sub B,\]
and therefore  $\nu(U)=\mu(B)=0$. This shows that $\nu$ is not strictly positive, and
we are done.
\end{proof}

\section{Extensions of measures}\label{em}
Throughout this section, by a measure we mean a probability finitely additive measure.
We collect here some standard observations concerning extensions of  measures on Boolean
algebras. Then we prove Proposition \ref{em:4} that will be applied for the construction carried out in the next section.

For simplicity, consider an algebra $\fA$ of subsets of  some  set $X$, and a finitely additive
$\mu$ on $\fA$. For any $Z\sub X$ we write
\[ \mu^*(Z)=\inf\{\mu(A): A\in\fA, A\supseteq Z\}, \quad \mu_*(Z)=\sup\{\mu(A): A\in\fA, A\subseteq Z\}.\]
Note that $\fA(Z)$, the algebra generated by $\fA\cup\{Z\}$, is equal to the family
of all sets of the form $(A\cap Z)\cup (B\cap Z^c)$, where $A,B\in\fA$.

\begin{theorem}[{\L}o\'s and Marczewski \cite{LM49}]\label{em:1}
Let $\mu$ be a measure on an algebra $\fA$ of subsets of $X$. For every $Z\sub X$
the formulas
\[ \overline{\mu}\big( (A\cap Z)\cup (B\cap Z^c)\big)=\mu^*(A\cap X)+\mu_*(B\cap Z^c),\]
\[ \underline{\mu}\big( (A\cap Z)\cup (B\cap Z^c)\big)=\mu_*(A\cap X)+\mu^*(B\cap Z^c),\]
define extensions of $\mu$ to  measures $\overline{\mu}, \underline{\mu}$ on
$\fA(Z)$.
\end{theorem}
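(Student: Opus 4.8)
The plan is to recast the two formulas intrinsically, prove a single complementation identity for the outer/inner functionals $\mu^*,\mu_*$, and then verify the axioms of a finitely additive probability measure on $\fA(Z)$.

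\emph{Recasting and well-definedness.} If $C\in\fA(Z)$ is written as $(A\cap Z)\cup(B\cap Z^c)$ with $A,B\in\fA$, then intersecting with $Z$ and with $Z^c$ gives $C\cap Z=A\cap Z$ and $C\cap Z^c=B\cap Z^c$. Hence the value assigned to $\overline{\mu}$ at $C$ is exactly $\mu^*(C\cap Z)+\mu_*(C\cap Z^c)$, and likewise $\underline{\mu}(C)=\mu_*(C\cap Z)+\mu^*(C\cap Z^c)$; both quantities depend only on $C$, so the two functionals are well defined on $\fA(Z)$ and are visibly nonnegative. What remains is to show that they extend $\mu$, that $\overline{\mu}(X)=\underline{\mu}(X)=1$, and that they are finitely additive on $\fA(Z)$.

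\emph{The complementation identity.} The heart of the matter is: for every $D\in\fA$,
\[ \mu^*(D\cap Z)+\mu_*(D\cap Z^c)=\mu(D). \]
To prove it, note that in the infimum defining $\mu^*(D\cap Z)$ one may restrict to $A\in\fA$ with $D\cap Z\sub A\sub D$ (pass from $A$ to $A\cap D$). For such $A$ the set $D\sm A$ lies in $\fA$ and in $D\cap Z^c$, and $A\mapsto D\sm A$ is a bijection between these two families; since $\mu(D\sm A)=\mu(D)-\mu(A)$ by finite additivity, passing to $\inf$ on one side and $\sup$ on the other gives the identity. Taking $D=C\in\fA$ shows $\overline{\mu}(C)=\mu(C)$, the same identity with $Z$ and $Z^c$ interchanged gives $\underline{\mu}(C)=\mu(C)$, and $D=X$ gives the normalisation.

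\emph{Finite additivity.} Let $C_1,C_2\in\fA(Z)$ be disjoint and write $C_i\cap Z=A_i\cap Z$, $C_i\cap Z^c=B_i\cap Z^c$ with $A_i,B_i\in\fA$. Replacing $A_2$ by $A_2\sm A_1$ and $B_2$ by $B_2\sm B_1$, which changes neither $A_2\cap Z=C_2\cap Z$ nor $B_2\cap Z^c=C_2\cap Z^c$ because $C_1,C_2$ are disjoint, we may assume $A_1\cap A_2=B_1\cap B_2=\emptyset$. Since $(C_1\cup C_2)\cap Z=(A_1\cup A_2)\cap Z$ is the disjoint union of the $A_i\cap Z$ (and similarly on $Z^c$), it suffices to check that $\mu^*$ is additive on the pair $A_1\cap Z,\ A_2\cap Z$ and $\mu_*$ on the pair $B_1\cap Z^c,\ B_2\cap Z^c$. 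One inequality is subadditivity of $\mu^*$, resp. superadditivity of $\mu_*$ (glue witnesses). For the other, any $A\in\fA$ with $A\sub A_1\cup A_2$ splits as $(A\cap A_1)\cup(A\cap A_2)$ with $A\cap A_i\supseteq A_i\cap Z$, so $\mu(A)=\mu(A\cap A_1)+\mu(A\cap A_2)\ge\mu^*(A_1\cap Z)+\mu^*(A_2\cap Z)$; dually, any $B\in\fA$ with $B\sub B_1\cup B_2$ splits with $B\cap B_i\sub B_i\cap Z^c$, giving $\mu(B)\le\mu_*(B_1\cap Z^c)+\mu_*(B_2\cap Z^c)$. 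Adding the two additivity statements yields $\overline{\mu}(C_1\cup C_2)=\overline{\mu}(C_1)+\overline{\mu}(C_2)$, and symmetrically for $\underline{\mu}$.

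\emph{Main obstacle.} The only delicate point is the additivity step: one must use disjointness of $C_1,C_2$ to choose disjoint representatives $A_1,A_2$ and $B_1,B_2$ in $\fA$, and then observe that on such structured pairs $\mu^*$ and $\mu_*$ are exactly additive rather than merely sub-, resp. super-additive. Everything else is bookkeeping around the complementation identity.
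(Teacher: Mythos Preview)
The paper does not prove this theorem; it merely states it and cites {\L}o\'s--Marczewski \cite{LM49}. So there is no proof in the paper to compare your attempt against.

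Your argument is a correct and standard verification of the result. You correctly interpret the evident typos in the displayed formulas ($A\cap X$ should read $A\cap Z$), and your key observation --- the complementation identity $\mu^*(D\cap Z)+\mu_*(D\cap Z^c)=\mu(D)$ for $D\in\fA$ --- is exactly the right engine: it gives well-definedness, the extension property, and normalisation all at once. The additivity step is handled cleanly by first reducing to disjoint representatives $A_1,A_2$ and $B_1,B_2$ in $\fA$ and then splitting witnesses. One small point of exposition: when you write ``any $A\in\fA$ with $A\sub A_1\cup A_2$'' in the additivity step, you are tacitly also assuming $A\supseteq(A_1\cup A_2)\cap Z$, using the same restriction trick you justified in the complementation-identity paragraph; it would be slightly clearer to say so explicitly.
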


\begin{corollary}\label{em:2}
Suppose that $\mu$ is a strictly positive measure on an algebra $\fA$ of subsets of $X$. Given $Z\sub X$, suppose that
the sets $Z^0=Z$ and $Z^1=X\sm Z$ satisfy the condition
\[ \mu^*(A\cap Z^i)>0\mbox{ whenever } A\in \fA\mbox{ and } A\cap Z^i \neq\emptyset.\]
Then $\mu$ admits an extension to a  strictly positive measure on $\fA(Z)$.
\end{corollary}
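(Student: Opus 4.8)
The plan is to apply Theorem \ref{em:1} and choose between the two extensions $\overline{\mu}$ and $\underline{\mu}$—or rather to show that at least one natural candidate is strictly positive. A cleaner route: use the extension $\overline{\mu}$ determined by $Z=Z^0$, so that $\overline{\mu}\big((A\cap Z)\cup(B\cap Z^c)\big)=\mu^*(A\cap Z)+\mu_*(B\cap Z^c)$, and separately use $\underline\mu$; by symmetry of the hypothesis in $Z^0$ and $Z^1$ we may instead take a single extension that behaves like the "upper" one on $Z$ and also like the "upper" one on $Z^c$. So first I would recall that a typical nonzero element of $\fA(Z)$ has the form $c=(A\cap Z^0)\cup(B\cap Z^1)$ with $A,B\in\fA$, and that $c\neq 0$ means $A\cap Z^0\neq\emptyset$ or $B\cap Z^1\neq\emptyset$.

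The key step is to pick the right extension. Consider $\overline\mu$ as in Theorem \ref{em:1}, which assigns $\overline\mu(c)=\mu^*(A\cap Z^0)+\mu_*(B\cap Z^1)$. If $A\cap Z^0\neq\emptyset$, the hypothesis for $i=0$ gives $\mu^*(A\cap Z^0)>0$, hence $\overline\mu(c)>0$. The problem is the remaining case: $A\cap Z^0=\emptyset$ but $B\cap Z^1\neq\emptyset$; here $\overline\mu(c)=\mu_*(B\cap Z^1)$, which the hypothesis does not control—the hypothesis gives $\mu^*$, not $\mu_*$, positivity. This is the main obstacle, and it is resolved by noting that in that case $c=B\cap Z^1=B\cap Z^c$, and we may equally well write $c=(B\cap Z^1)\cup(\emptyset\cap Z^0)$ and apply $\underline\mu$ instead, which gives $\underline\mu(c)=\mu_*(\emptyset)+\mu^*(B\cap Z^1)=\mu^*(B\cap Z^1)>0$ by the hypothesis for $i=1$. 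So neither $\overline\mu$ nor $\underline\mu$ alone works for all $c$, but each works in the "half" where the other fails.

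To turn this into a genuine single strictly positive measure, I would take the average $\nu=\tfrac12(\overline\mu+\underline\mu)$, which is again a finitely additive extension of $\mu$ to $\fA(Z)$ (the extensions agree on $\fA$, and a convex combination of extensions is an extension). Then for any nonzero $c=(A\cap Z^0)\cup(B\cap Z^1)$: if $A\cap Z^0\neq\emptyset$ then $\overline\mu(c)\ge\mu^*(A\cap Z^0)>0$, and since $\underline\mu(c)\ge 0$ we get $\nu(c)>0$; symmetrically, if $B\cap Z^1\neq\emptyset$ then $\underline\mu(c)=\mu_*(A\cap Z^0)+\mu^*(B\cap Z^1)\ge\mu^*(B\cap Z^1)>0$, so again $\nu(c)>0$. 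Since at least one of the two alternatives holds, $\nu$ is strictly positive on $\fA(Z)$, completing the proof.

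I should double-check one routine point: that $\overline\mu$ and $\underline\mu$ are genuinely defined on all of $\fA(Z)$ and are nonnegative, which is exactly the content of Theorem \ref{em:1}; and that the representation $c=(A\cap Z^0)\cup(B\cap Z^1)$, while not unique, yields a well-defined value (also part of Theorem \ref{em:1}), so the case analysis above is legitimate regardless of which representation is chosen. With that, the averaging argument is immediate and the corollary follows.
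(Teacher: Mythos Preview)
Your proposal is correct and is exactly the paper's approach: take $\nu=\tfrac12(\overline{\mu}+\underline{\mu})$ and observe that strict positivity follows from the hypothesis. The paper compresses the case analysis you spell out into the phrase ``it follows immediately from the assumption that $\nu$ is strictly positive,'' but the argument is the same.
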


\begin{proof}
Take the measure  $\nu=1/2(\overline{\mu}+\underline{\mu})$, where $\overline{\mu},\underline{\mu}$ are as in Theorem \ref{em:1};
clearly, $\nu$ is also an extension of $\mu$ to a measure on $\fA(Z)$.
It follows immediately from the assumption that $\nu$ is strictly positive.
\end{proof}

Consider now the space of the form $X=2^\kappa$. For any $\alpha<\kappa$ and $k\in\{0,1\}$ we put
\[C^k_\alpha=\{x\in 2^\kappa: x_\alpha=k\}.\]
A set $A\sub X$ is determined by coordinates in $I\sub\kappa$ if
$A=\pi_I^{-1}\pi_I[A]$, where $\pi_I$ is the projection $2^\kappa\to 2^I$. This is equivalent to saying that whenever $x\in A$ and $y\in X$ agrees with $x$ on $I$ then $y\in A$.

Given $I\sub\kappa$, we write $\CC[I]$ for the family of sets determined by coordinates in some finite subset of $I$ so that
$\CC[I]$ is the family of clopen subsets of the Cantor cube $2^\kappa$ determined by coordinates in $I$.
We denote by $Ba[I]$ the $\sigma$-algebra of subsets of $2^\kappa$ generated by $\CC[I]$.
Note that every set $B\in Ba[I]$ is determined by coordinates in some countable subset of $I$.
(Our notation is related to the fact that $Ba[\kappa]$ is the Baire $\sigma$-algebra of $2^\kappa$, the smallest one making all the continuous functions on $2^\kappa$ continuous.)

For a limit ordinal $\xi$ we denote
\[ Ba^<[\xi]=\bigcup_{\beta<\xi} Ba[\beta].\]
Note that if $\cf(\xi)>\omega$ then $Ba^<[\xi]=Ba[\xi]$.

\begin{lemma}\label{em:3}
Let $\fA$ be an algebra contained in some $Ba[I]$ and let $Z\in Ba[\kappa\sm I]$.
Then every strictly positive measure on $\fA$ can be extended to a strictly positive measure on $\fA(Z)$.
\end{lemma}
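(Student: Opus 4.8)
The plan is to deduce the lemma from Corollary \ref{em:2}, applied with $X=2^\kappa$. Thus, letting $\mu$ be a given strictly positive measure on $\fA$, everything reduces to checking the hypothesis of that corollary: writing $Z^0=Z$ and $Z^1=2^\kappa\sm Z$, one needs $\mu^*(A\cap Z^i)>0$ for every $A\in\fA$ with $A\cap Z^i\neq\emptyset$, where $\mu^*$ is the outer measure associated with $\mu$ on $\fA$. Observe first that $Z^1$, like $Z^0$, belongs to $Ba[\kappa\sm I]$, since $Ba[\kappa\sm I]$ is a $\sigma$-algebra; so the two cases are symmetric and it suffices to treat a fixed $i$.

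The heart of the matter is the following independence property of disjoint blocks of coordinates. Fix $A\in\fA$ with $A\cap Z^i\neq\emptyset$ and any $B\in\fA$ with $B\supseteq A\cap Z^i$; I claim that then $A\sub B$. To see this, fix a point $z\in A\cap Z^i$, let $x\in A$ be arbitrary, and let $y\in 2^\kappa$ be the point that agrees with $x$ on $I$ and with $z$ on $\kappa\sm I$. Since $A\in Ba[I]$ is determined by coordinates in $I$ and $y$ agrees with $x$ on $I$, we get $y\in A$; since $Z^i\in Ba[\kappa\sm I]$ is determined by coordinates in $\kappa\sm I$ and $y$ agrees with $z$ there, we get $y\in Z^i$; hence $y\in A\cap Z^i\sub B$. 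Finally $B\in Ba[I]$ is determined by coordinates in $I$ and $x$ agrees with $y$ on $I$, so $x\in B$. This proves $A\sub B$, whence $\mu(B)\ge\mu(A)$. Taking the infimum over all such $B$, and noting that $A$ itself witnesses $\mu^*(A\cap Z^i)\le\mu(A)$, we conclude $\mu^*(A\cap Z^i)=\mu(A)$, which is positive because $A\neq\emptyset$ and $\mu$ is strictly positive on $\fA$.

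With this verified for both $i=0,1$, Corollary \ref{em:2} provides an extension of $\mu$ to a strictly positive measure on $\fA(Z)$, as required. I do not anticipate a genuine obstacle; the only point demanding care is the bookkeeping around the phrase ``determined by coordinates in $I$''. A set in $Ba[I]$ is, a priori, only determined by coordinates in some countable subset of $I$, but that makes it a fortiori determined by coordinates in $I$, so the splicing argument above applies verbatim; and the remark that the $\kappa\sm I$-determined sets form a $\sigma$-algebra is what lets the same reasoning handle the complement $Z^1$.
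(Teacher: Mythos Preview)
Your proposal is correct and follows essentially the same route as the paper: both verify the hypothesis of Corollary~\ref{em:2} by showing $\mu^*(A\cap Z^i)=\mu(A)$ via the independence of $I$-determined sets from $(\kappa\setminus I)$-determined sets. The paper phrases the key step more tersely (from $A\cap Z\subseteq B$ it notes $A\setminus B\subseteq X\setminus Z$ and then that a nonempty $I$-determined set cannot lie inside a proper $(\kappa\setminus I)$-determined set), whereas you spell out the underlying coordinate-splicing explicitly; the content is the same.
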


\begin{proof}
Clearly, we can assume that $Z\neq\emptyset$ and $Z\neq X$.

Let $A,B\in\fA$ and suppose that $A\cap Z\sub B$. Then $A\sm B\sub X\sm Z$, which implies
$A\sub B$ since $A\sm B$ is determined by coordinates in $I$.

Let $\mu$ be strictly positive on $\fA$.
The above remark shows that $\mu^*(A\cap Z)=\mu(A)$ for every $A\in\fA$.
We can apply the same argument to $X\sm Z$.
Hence, we finish the proof applying  Lemma \ref{em:2}.
\end{proof}

\begin{proposition}\label{em:4}
Let $\langle I_k: k\in\omega\rangle $ be a strictly increasing sequence of subsets of $\kappa$.
Suppose we are given

\begin{enumerate}[(i)]
\item an increasing  sequence of algebras $\fA_k$ such that
$\CC[I_k]\sub \fA_k\sub Ba[I_k]$ for every $k$;
\item some probability measure $\nu$ defined on $\fA=\bigcup_k\fA_k$.
\end{enumerate}

Then there is a set $Z\in Ba[\bigcup_k I_k]$ such that

\begin{enumerate}[(a)]
\item for every $k$, if $\mu$ is a strictly positive measure on $\fA_k$ then $\mu$ extends to a strictly positive
measure on $\fA_k(Z)$;
\item $\nu$ does not extend to a strictly positive measure on $\fA(Z)$.
\end{enumerate}
\end{proposition}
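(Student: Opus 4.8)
The plan is to build $Z$ as a "diagonal" element living on the new coordinates $\bigcup_k I_k \setminus I_0$ (or rather, spread out so that each finite level $I_k$ sees only finitely much of it), in such a way that $Z$ itself is forced to have outer/inner measure governed entirely by $\nu$, while at each fixed level $\fA_k$ the set $Z$ looks like a set depending on coordinates outside $I_k$, so that Lemma~\ref{em:3} applies. Concretely, since the sequence $\langle I_k\rangle$ is strictly increasing, I would pick for each $k$ an ordinal $\gamma_k \in I_{k+1}\setminus I_k$, and try to code $Z$ using the coordinates $\gamma_0,\gamma_1,\ldots$. The point of spreading $Z$ across infinitely many coordinates, one per level, is that for a fixed $k$, no single set in $\fA_k \subseteq Ba[I_k]$ can "see" the coordinates $\gamma_k,\gamma_{k+1},\ldots$, which is exactly the hypothesis $Z\in Ba[\kappa\setminus I_k]$-type situation needed to invoke Lemma~\ref{em:3} at level $k$ — except that $Z$ will genuinely depend on $\gamma_0,\ldots,\gamma_{k-1}$ too, so I will need $Z$ to be, modulo the first $k$ coordinates, still "free" on the tail.

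The natural candidate: enumerate $\fA = \bigcup_k \fA_k$ using a countable dense-type bookkeeping. Since $\nu$ is a fixed probability measure on the countably generated algebra $\fA$, choose a decreasing sequence $a_0 \supseteq a_1 \supseteq \cdots$ in $\fA^+$ (with $a_k \in \fA_k$) along which $\nu(a_k) \downarrow \delta > 0$ — if no such positive-limit chain exists one argues directly that $\nu$ is already non-strictly-positive on a quotient; more robustly, since $\fA$ is countably generated and $\nu$ is finitely additive, I would find a single $b \in \fA^+$ that is an atom of the measure algebra or, failing that, an element on which $\nu$ concentrates a positive mass that cannot be "spread" — the cleanest route is: because $Ba^<$-type algebras here are countably generated, the measure algebra of $(\fA,\nu)$ is separable, so there is $b\in\fA^+$ with $\nu(b)>0$ such that $\nu$ restricted below $b$ is atomless-or-has-an-atom; in either case choose $Z$ below $b$ on the new coordinates to swallow all of $\nu$'s mass on $b$, forcing $\nu_*(Z\cap b)=0$ and $\nu^*(Z\cap b)=\nu(b)$ in a way incompatible with strict positivity of any extension. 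For (b) it suffices that there be $c \in \fA(Z)^+$ with $\nu'(c) = 0$ for every extension $\nu'$: take $c = b \wedge Z^c$ (or $b\wedge Z$) arranged so that $\nu^*$ of its "$Z$-part" is $0$ while it is nonempty.

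For (a), fix $k$ and a strictly positive $\mu$ on $\fA_k$. I want $Z \in Ba[\bigcup I_k]$ to be extendable-preserving-strict-positivity over $\fA_k$. By Corollary~\ref{em:2} it is enough that $\mu^*(A \cap Z) > 0$ whenever $A\in\fA_k$ meets $Z$, and symmetrically for $X\setminus Z$. This is where the construction of $Z$ on the coordinates $\gamma_k, \gamma_{k+1},\ldots$ (which $\fA_k$ cannot resolve) pays off: I will arrange that for every $A\in\fA_k$, both $A\cap Z$ and $A\cap Z^c$ either are empty or contain a full clopen "cylinder" $A' \cap C^{i}_{\gamma_k}$ (a set still in $Ba[I_k]$ after projecting away $\gamma_k$), hence have $\mu^*$-value at least $\mu(A')/\text{(something)} > 0$ by strict positivity of $\mu$ on $\fA_k$. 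In other words $Z$ should be chosen so that, as a subset of $2^\kappa$, it is "dense and codense in direction $\gamma_j$" for every $j$, e.g. $Z$ defined by a condition that for coordinates $\gamma_0,\gamma_1,\ldots$ reads like "$x_{\gamma_j} = 1$ for the least $j$ with $x\in b_j \setminus b_{j+1}$" or a similar Baire-class prescription; one checks this is in $Ba[\bigcup I_k]$ since it is a countable Boolean combination of the $C^1_{\gamma_j}$ and the $b_j$.

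The main obstacle I anticipate is reconciling the two demands on $Z$: condition (a) wants $Z$ (and $Z^c$) to be "fat" over every $A\in\fA_k$ for every $k$ — essentially $Z$ should split every nonzero element of every $\fA_k$ into two outer-positive pieces — while condition (b) wants $Z$ to be "thin" enough, relative to $\nu$ on all of $\fA$, that some nonzero element of $\fA(Z)$ gets $\nu$-outer-measure zero. These pull in opposite directions, and the resolution must exploit the asymmetry between the $\mu$'s (each living only on a bounded level $\fA_k$, blind to the tail coordinates $\gamma_k,\gamma_{k+1},\dots$) and $\nu$ (living on all of $\fA$ and hence able to "track" the full diagonal). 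Making the bookkeeping precise — choosing the coordinates $\gamma_k$ and the prescription for $Z$ so that at level $k$ only $\gamma_0,\dots,\gamma_{k-1}$ are "used up" (and those finitely many can be absorbed into $\fA_k$ via the inclusion $\CC[I_k]\subseteq\fA_k$, after which Lemma~\ref{em:3} handles the genuinely-new tail), while the infinite diagonal still defeats $\nu$ — is the technical heart of the argument.
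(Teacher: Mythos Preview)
Your architecture is exactly the paper's: pick $\gamma_k\in I_{k+1}\setminus I_k$ (the paper writes $\alpha_k\in I_k\setminus I_{k-1}$), build $Z$ on these coordinates, and for (a) observe that the finitely many coordinates $\gamma_0,\dots,\gamma_{k-1}$ already lie in $I_k$ and hence in $\CC[I_k]\subseteq\fA_k$, so $\fA_k(Z)\subseteq\fA_k(Y_k)$ for the tail $Y_k\in Ba[\kappa\setminus I_k]$, and Lemma~\ref{em:3} finishes. That part is correct and matches the paper.

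The gap is in (b). You go looking for decreasing chains with $\nu(a_k)\downarrow\delta>0$, atoms of the measure algebra, separability, and a set $Z$ defined by a ``least $j$ with $x\in b_j\setminus b_{j+1}$'' condition. None of this is needed, and as written none of it clearly produces a nonzero element of $\fA(Z)$ with $\nu^*$-value zero; in particular your candidate $Z$ splits each shell $b_j\setminus b_{j+1}$ in half, which gives no reason for $\nu^*(Z)$ or $\nu^*(b\setminus Z)$ to vanish. The tension you describe at the end (``fat over every $\fA_k$'' versus ``thin for $\nu$'') is real, but it dissolves with a much simpler choice of $Z$.

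The paper simply takes $Z=\bigcap_{k} C_{\alpha_k}^{j_k}$, where at stage $k$ one chooses $j_k\in\{0,1\}$ greedily so that $\nu\big(\bigcap_{n\le k}C_{\alpha_n}^{j_n}\big)\le \tfrac12\,\nu\big(\bigcap_{n<k}C_{\alpha_n}^{j_n}\big)$; this is always possible since the two halves have $\nu$-measures summing to the previous value. Then $Z$ is a nonempty product-type set (hence in $Ba[\bigcup_k I_k]$), the finite partial intersections witness $\nu^*(Z)=0$, and any extension of $\nu$ to $\fA(Z)$ must assign $Z$ measure zero. For (a), $Z=Z_k\cap Y_k$ with $Z_k=\bigcap_{n\le k}C_{\alpha_n}^{j_n}\in\CC[I_k]\subseteq\fA_k$ and $Y_k=\bigcap_{n>k}C_{\alpha_n}^{j_n}\in Ba[\kappa\setminus I_k]$, so $\fA_k(Z)\subseteq\fA_k(Y_k)$ and Lemma~\ref{em:3} applies exactly as you anticipated. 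No bookkeeping, no atoms, no dense-codense analysis---the single greedy halving step does all the work.
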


\begin{proof}
For every $k\ge 1$ pick $\alpha_k\in I_k\sm I_{k-1}$. Then define $j_k\in \{0,1\}$ inductively
so that
\[\mbox{writting } Z_k=\bigcap_{n\le k} C_{\alpha_n}^{j_n}\mbox{ we have } \nu(Z_{k+1})\le (1/2)\nu (Z_k).\]
We shall check that the set $Z=\bigcap_k Z_k$ is as required.

Clause (a) follows from Lemma \ref{em:3} and the fact that $\fA_k(Z)=\fA_k(Y_k)$, where $Y_k=\bigcap_{n>k} Z_n$ is determined by coordinates in $\kappa\sm I_k$.

To check (b) notice that, since $\nu(Z_k)\to 0$, we have $\nu^*(Z)=0$. Therefore $\widetilde{\nu}(Z)\le \nu^*(Z)=0$ whenever $\widetilde{\nu}$ extends $\nu$ to a measure on $\fA(Z)$.
\end{proof}

\section{A counterexample in $V=L$}\label{v=l}

Let $\gamma$ be a limit ordinal. Recall that  set $F\sub\gamma$ is said to be {\em closed}  if it is closed in the interval topology defined on ordinals smaller that $\gamma$.
Such a set $F$ is unbounded in $\gamma$ if for every $\beta<\gamma$ there is $\alpha\in F$ such that $\beta<\alpha$. A set $S\sub \gamma$ is {\em stationary}
if $S\cap F\neq\emptyset$ for every closed and unbounded $F\sub\gamma$.

It is not difficult to check that the set $S_\omega=\{\alpha<\omega_2: \cf(\alpha)=\omega\}$ is stationary in $\omega_2$. However, such a set reflects in the sense
that, for instance, $S_\omega\cap\omega_1$ is stationary in $\omega_1$.
We shall work assuming the following.

\begin{axiom}\label{ax1}
 There is a stationary set $S\sub\omega_2$ such that

\begin{enumerate}[(a)] \label{st:1}
\item $\cf(\alpha)=\omega$ for every $\alpha\in S$;
\item $S\cap\beta$ is not stationary in $\beta$ for every $\beta<\omega_2$ with $\cf(\beta)=\omega_1$.\label{st:1b}
\end{enumerate}
\end{axiom}

Basic information on \ref{ax1} can be found in Jech \cite{Jech}; recall that \ref{ax1} follows from Jensen's principle $\square_{\omega_1}$ (\cite{Jech}, Lemma 23.6)
and hence it holds  in the constructible universe (\cite{Jech}, Theorem 27.1).

Below we use the notation from the previous section. In particular, for $\xi<\omega_2$ we denote by $Ba[\xi]$ the family of Baire subsets of $2^{\omega_2}$ determined by coordinates in $\{\alpha: \alpha<\xi\}$.
Note that $Ba[\xi]$ has cardinality $\le\con$ for every $\xi<\omega_2$.

\begin{axiom}\label{ax2}
$\omega_1=2^\omega ,\omega_2=2^{\omega_1}. $
\end{axiom}

\begin{definition} Let $\kappa$ be a regular cardinal, $S\subseteq\kappa$ a stationary subset of $\kappa$. The principle $\lozenge_S$ (introduced by Jensen ) states that there is a sequence $\langle D_\alpha : \alpha \in S\rangle$ such that for every $\alpha\in S$ we have $D_\alpha\subseteq \alpha $ and for every $D\subseteq \kappa$ the set
\[ \{ \alpha\in S : D\cap\alpha=S_\alpha \}\]
is stationary in $\kappa$.
\end{definition}

\begin{theorem}[Shelah \label{shelah} \cite{Sh10}]
Assume that

\begin{enumerate}[--]
\item $\kappa$ is a regular uncountable cardinal such that $2^\kappa=\kappa^+$;
 \item $S$ be stationary subset of $\kappa^+$ such that
$\cf(\alpha)\neq\kappa$ for $\alpha\in S$.
\end{enumerate}

Then $\lozenge_S$ holds.
\end{theorem}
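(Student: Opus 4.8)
The plan is to reduce $\lozenge_S$ to a local ``prediction'' statement and then to imitate Jensen's $V=L$ construction, with $2^\kappa=\kappa^+$ taking over the role of condensation.

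\emph{Reductions.} Write $\lambda=\kappa^+$. Since $\lozenge_{S'}$ implies $\lozenge_S$ for every stationary $S'\subseteq S$ (extend a $\lozenge_{S'}$-sequence by the empty set off $S'$), and since the nonstationary ideal on the regular cardinal $\lambda$ is $\lambda$-complete, I would first discard the nonstationary set of successor ordinals and split what remains into the at most $\kappa<\lambda$ sets $S\cap\{\delta<\lambda:\cf(\delta)=\theta\}$ with $\theta<\kappa$ regular; one of them is stationary, so I may assume $S\subseteq S_\theta:=\{\delta<\lambda:\cf(\delta)=\theta\}$ for a single regular $\theta<\kappa$. (That $\cf(\delta)<\kappa$ for $\delta\in S$ is automatic from $|\delta|\le\kappa$ together with the hypothesis $\cf(\delta)\ne\kappa$ — this is exactly where that hypothesis enters.) It then suffices to produce the ``minus'' form $\lozenge^-_S$: a sequence $\langle\cP_\delta:\delta\in S\rangle$ with each $\cP_\delta\in[\cP(\delta)]^{\le\kappa}$ such that $\{\delta\in S:A\cap\delta\in\cP_\delta\}$ is stationary for every $A\subseteq\lambda$; indeed $\lozenge^-_S$ is equivalent to $\lozenge_S$ by Kunen's argument, which is stated for $\lambda=\omega_1$ but goes through for every regular uncountable $\lambda$.

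\emph{The construction.} From $2^\kappa=\lambda$ I get $|\cP(\gamma)|\le\lambda$ for all $\gamma<\lambda$ and $\lambda^\theta=\lambda$, so I can fix a surjective enumeration $\langle(\gamma_\xi,A_\xi):\xi<\lambda\rangle$ of all pairs $(\gamma,A)$ with $A\subseteq\gamma<\lambda$ — the ``local approximations''. On $S\subseteq S_\theta$ I invoke Shelah's basic club-guessing theorem (applicable since $\theta^+<\lambda$) to fix $\langle C_\delta:\delta\in S\rangle$ with $C_\delta\subseteq\delta$ club of order type $\theta$ and with $\{\delta\in S:C_\delta\subseteq E\}$ stationary for every club $E\subseteq\lambda$. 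Then I build, by recursion on $\gamma<\lambda$, a $\subseteq$-coherent system $\langle\cP_\gamma:\gamma<\lambda\rangle$ with $|\cP_\gamma|\le\kappa$ and $\cP_\gamma\subseteq\{B\subseteq\gamma:B\cap\gamma'\in\cP_{\gamma'}\ \text{for all}\ \gamma'<\gamma\}$, throwing into each $\cP_\gamma$ the approximations reached by the bookkeeping and closing it under amalgamation of coherent chains of length $\theta$ from the earlier $\cP$'s. For the verification: given $A\subseteq\lambda$, the set $\{\gamma<\lambda:A\cap\gamma\in\cP_\gamma\}$ should, if the recursion is arranged correctly, contain a club $E$; then club guessing yields stationarily many $\delta\in S$ with $C_\delta\subseteq E$, so that every $A\cap\beta$ ($\beta\in C_\delta$) already lies in $\cP_\beta$, and the amalgamation closure puts $\bigcup_{\beta\in C_\delta}(A\cap\beta)=A\cap\delta$ into $\cP_\delta$. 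This gives $\lozenge^-_S$, hence $\lozenge_S$.

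\emph{The main obstacle} is the recursion itself: keeping $|\cP_\gamma|\le\kappa$ while still making it rich enough that the club $E$ above exists. Naive closure under coherent amalgamation of length-$\theta$ chains produces up to $\kappa^\theta$ sets, which is no longer $\kappa$ once $2^\theta>\kappa$ — and $2^\theta$ can be as large as $2^\kappa=\kappa^+=\lambda$; so the amalgamations actually committed to must be chosen with strict economy, with the bookkeeping, the coherence requirement, and the size bound all played off against one another. This balancing is the technical heart of \cite{Sh10}, and I would not expect to reproduce it in a few lines. It is also what forces $\cf(\delta)\ne\kappa$: only then are the amalgamated chains of length $<\kappa$, so that one manipulates a tree of height $<\kappa$ with $\kappa$-sized levels; at cofinality $\kappa$ a single amalgamation step already ranges over $\kappa^\kappa=2^\kappa=\lambda$ possibilities, which no family of size $\kappa$ can carry. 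The hypothesis $2^\kappa=\kappa^+$ is used both to list every local approximation within $\lambda$ stages and to leave the length-$\lambda$ recursion enough room for the club-guessing argument to work.
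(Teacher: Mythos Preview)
The paper does not prove this theorem: it is quoted from Shelah \cite{Sh10} as a black box, and only its corollary about guessing measures on $Ba^{<}[\xi]$ is used downstream. So there is no proof in the paper against which to compare your proposal.

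Read on its own terms, your outline is a plan rather than a proof. The reductions to a fixed cofinality $\theta<\kappa$ and to $\lozenge^-_S$ are standard and correct, and you rightly locate where the hypothesis $\cf(\delta)\ne\kappa$ enters. But the decisive step---building $\langle\cP_\gamma\rangle$ so that $\{\gamma:A\cap\gamma\in\cP_\gamma\}$ contains a club for every $A$ while keeping $|\cP_\gamma|\le\kappa$---is left as a promise (``if the recursion is arranged correctly''), and you explicitly concede that this balancing is the technical heart of \cite{Sh10} and is not reproduced. So your proposal is an honest map of the terrain with the main peak unclimbed; that matches the level of detail in the paper (which simply cites the result), but it is not itself a proof.
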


 Note that if $2^\omega=\omega_1$ then for every limit ordinal $\xi$ with $\omega_1\leq\xi\leq\omega_2$,  every probability measure on
 $Ba^<[\xi]$ can be coded as a subset of $\xi$. Hence from \ref{shelah} we conclude the following.

\begin{corollary} Assume that Axiom \ref{ax2} holds and let $S$ be a stationary subset of $\omega_2$ such that $\cf(\alpha)=\omega$ for $\alpha\in S$. Then
there is a sequence $\langle \nu_\xi: \xi\in S\rangle$ where every $\nu_\xi$ is finitely additive probability measure
on $Ba^<[\xi]$ such that whenever $\nu$ is a finitely additive probability measure on $Ba[\omega_2]$ then
$\nu_{|Ba^<[\xi]}=\nu_\xi$ for stationary many $\xi\in S$.
\end{corollary}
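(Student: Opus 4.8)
The plan is to deduce Jensen's principle $\lozenge_S$ on $\omega_2$ from Shelah's theorem and then to convert the guessing of subsets of $\omega_2$ into the guessing of measures by means of a coherent coding.

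First I would apply Theorem~\ref{shelah} with $\kappa=\omega_1$. By Axiom~\ref{ax2} we have $2^{\omega_1}=\omega_2=\omega_1^+$; moreover $\omega_1$ is regular and uncountable, and $S\sub\omega_1^+=\omega_2$ is stationary with $\cf(\alpha)=\omega\neq\omega_1$ for all $\alpha\in S$. Hence $\lozenge_S$ holds, and I may fix a sequence $\langle D_\alpha:\alpha\in S\rangle$ with $D_\alpha\sub\alpha$ such that $\{\alpha\in S:D\cap\alpha=D_\alpha\}$ is stationary in $\omega_2$ for every $D\sub\omega_2$.

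Next I would set up the coding. Every Baire subset of $2^{\omega_2}$ depends on countably many coordinates, so $Ba[\omega_2]=\bigcup_\xi Ba^<[\xi]$, where $\xi$ ranges over limit ordinals $<\omega_2$; the sets $Ba^<[\xi]$ increase, the chain is continuous at limits of limit ordinals, $|Ba^<[\xi]|\le\con=\omega_1$ for $\xi<\omega_2$, and $|Ba[\omega_2]|=\omega_2$ (since $\omega_2^\omega=\omega_2$ under Axiom~\ref{ax2}). By a routine transfinite recursion along (a continuously reindexed version of) this chain I would fix a bijection $\Phi\colon Ba[\omega_2]\to\omega_2$ with $\Phi^{-1}[\xi]=Ba^<[\xi]$ for all $\xi$ in a club $C_0$. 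Using $2^\omega=\omega_1$ I also fix a bijection $\rho\colon[0,1]\to\omega_1$, and I let $\Gamma$ be the G\"odel pairing function, recalling that $\Gamma$ maps $\mathrm{Ord}\times\mathrm{Ord}$ bijectively onto $\mathrm{Ord}$, satisfies $\Gamma(\alpha,\beta)\ge\max(\alpha,\beta)$, and has $\Gamma[\xi\times\xi]=\xi$ for all $\xi$ in a club. To a finitely additive probability measure $\mu$ with domain $Ba^<[\xi]$ or $Ba[\omega_2]$ I attach the code
\[ \mathrm{code}(\mu)=\{\Gamma(\Phi(B),\rho(\mu(B))):B\in\mathrm{dom}\,\mu\}\sub\mathrm{Ord}. \]
The crucial point is that there is a club $C\sub C_0$ such that, for every $\xi\in C$ and every finitely additive probability measure $\nu$ on $Ba[\omega_2]$,
\[ \mathrm{code}(\nu)\cap\xi=\mathrm{code}(\nu\uhr Ba^<[\xi]). \]
Indeed, taking $\xi\in C$ above $\omega_1$ and a closure point of $\Gamma$: for $B\in Ba^<[\xi]$ we have $\Phi(B)<\xi$ and $\rho(\nu(B))<\omega_1\le\xi$, hence $\Gamma(\Phi(B),\rho(\nu(B)))<\xi$, whereas conversely $\Gamma(\Phi(B),\rho(\nu(B)))<\xi$ forces $\Phi(B)<\xi$ and so $B\in Ba^<[\xi]$. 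Since $\Phi$, $\Gamma$ and $\rho$ are fixed, whether a set $D\sub\xi$ equals $\mathrm{code}(\mu)$ for some finitely additive probability measure $\mu$ on $Ba^<[\xi]$, and which one, is determined by $D$ and $\xi$.

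Finally I would set, for $\xi\in S$: if $\xi\in C$ and $D_\xi$ codes a finitely additive probability measure on $Ba^<[\xi]$, let $\nu_\xi$ be that measure; otherwise let $\nu_\xi$ be a fixed finitely additive probability measure on $Ba^<[\xi]$, e.g.\ a Dirac measure. Given any finitely additive probability measure $\nu$ on $Ba[\omega_2]$, applying $\lozenge_S$ to $D=\mathrm{code}(\nu)$ yields a stationary $T=\{\alpha\in S:\mathrm{code}(\nu)\cap\alpha=D_\alpha\}$; as $C$ is a club, $T\cap C$ is stationary, and for every $\alpha\in T\cap C$ we have $D_\alpha=\mathrm{code}(\nu)\cap\alpha=\mathrm{code}(\nu\uhr Ba^<[\alpha])$, so $D_\alpha$ codes $\nu\uhr Ba^<[\alpha]$ and therefore $\nu_\alpha=\nu\uhr Ba^<[\alpha]$. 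Hence $\nu\uhr Ba^<[\xi]=\nu_\xi$ for stationarily many $\xi\in S$. I expect the only genuinely delicate ingredient to be the construction of the bijection $\Phi$ together with the verification of the displayed coherence of the codes on $C$; the remainder is a standard $\lozenge$-reflection argument.
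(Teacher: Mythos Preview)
Your proposal is correct and follows exactly the approach the paper sketches: the paper merely remarks that under $2^\omega=\omega_1$ every probability measure on $Ba^<[\xi]$ can be coded as a subset of $\xi$, and then appeals to $\lozenge_S$ via Shelah's theorem, leaving the coherence of the coding implicit. You have supplied precisely the details the paper omits --- the bijection $\Phi$ respecting the filtration on a club, the pairing with values via $\rho$ and $\Gamma$, and the verification that $\mathrm{code}(\nu)\cap\xi=\mathrm{code}(\nu\uhr Ba^<[\xi])$ on a club --- so there is no substantive difference in strategy.
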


\begin{xca}\label{construction}
Assume \ref{ax1} and \ref{ax2}. Fix a set $S\sub\omega_2$ as in \ref{ax1} and a $\diamondsuit_S$-sequence $\langle \nu_\xi: \xi\in S\rangle$ as in \ref{shelah}.
We shall define inductively a sequence $\langle \fA_\xi: \xi<\omega_2\rangle$ of algebras with the following properties

\begin{enumerate}[(i)]
\item $\CC[\xi]\sub \fA_\xi\sub Ba^<[\xi]$ for every $\xi<\omega_2$;
\item $\fA_\xi=\bigcup_{\alpha<\xi}\fA_\xi$ for every limit ordinal $\xi<\omega_2$;
\item whenever $\alpha\in\omega_2\sm S$ and $\alpha<\beta<\omega_2$ then every strictly positive measure on
$\fA_\alpha$ can be extended to a strictly positive measure on $\fA_\beta$;
\item for every $\xi\in S$, ${\nu_\xi}_{ | \fA_\xi}$ cannot be extended to a strictly positive measure on $\fA_{\xi+1}$.
\end{enumerate}
\end{xca}

For a limit ordinal $\xi$  we define $\fA_\xi$ by  \ref{construction}(ii); clearly (i) holds and  it is easy to check that property (iii) is preserved.

Given $\xi\notin S$ and $\fA_\xi$, we let $\fA_{\xi+1}$ be the algebra generated by $\fA_\xi$ and the set $C_\xi^0$.
Then (iii) is preserved by Lemma \ref{em:3}.

Finally,  consider $\xi\in S$. Then ${\rm cf}(\xi)=\omega$ so we may pick increasing sequence $(\alpha_n)_n$ cofinal  in
$\xi$. Then, by
inductive assumption, $\fA_\xi=\bigcup_n \fA_{\alpha_n}$. We define $\fA_{\xi+1}$ applying Proposition \ref{em:4}
(with $\nu=\nu_\xi$).

\begin{theorem}
Assume Axiom \ref{ax1} and \ref{ax2}, and let $\fA_\xi$ be the algebras given by \ref{construction}.
Then the  algebra $\fA=\bigcup_{\xi<\omega_2}\fA_\xi$ (of cardinality $\con^+$) is not in \spm\ but
$\fB\in\spm$ for  every subalgebra $\fB$ of $\fA$ of cardinality at most  $\con$.
\end{theorem}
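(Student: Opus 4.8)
The plan is to check the three parts of the statement in turn, using only the properties (i)--(iv) recorded in Construction \ref{construction}. The cardinality of $\fA$ is immediate: by \ref{construction}(i) we have $\fA_\xi\sub Ba^<[\xi]$ and $|Ba^<[\xi]|\le\con$ for every $\xi<\omega_2$, so $|\fA|\le\omega_2\cdot\con=\con^+$; on the other hand $\fA\supseteq\bigcup_{\xi<\omega_2}\CC[\xi]=\CC[\omega_2]$, which has cardinality $\con^+$. Thus $|\fA|=\con^+$, and it remains to see that $\fA\notin\spm$ while every subalgebra of $\fA$ of size $\le\con$ lies in $\spm$.

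For $\fA\notin\spm$, suppose towards a contradiction that $\mu$ is a strictly positive (finitely additive) probability measure on $\fA$. Since the $\diamondsuit_S$-sequence $\langle\nu_\xi:\xi\in S\rangle$ fixed in \ref{construction} guesses restrictions of measures defined on all of $Ba[\omega_2]$ (the corollary to Theorem \ref{shelah}), the first step is to extend $\mu$: viewing $\fA$ as a subalgebra of the algebra $Ba[\omega_2]$ of subsets of $2^{\omega_2}$, a routine transfinite iteration of Theorem \ref{em:1} (taking unions of algebras and of measures at limit stages) produces a finitely additive probability measure $\widetilde\mu$ on $Ba[\omega_2]$ with $\widetilde\mu\uhr\fA=\mu$. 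Now pick $\xi\in S$ with $\widetilde\mu\uhr Ba^<[\xi]=\nu_\xi$. Because $\fA_\xi\sub Ba^<[\xi]$ by \ref{construction}(i) and $\fA_\xi\sub\fA$, we obtain ${\nu_\xi}\uhr\fA_\xi=\widetilde\mu\uhr\fA_\xi=\mu\uhr\fA_\xi$; and since $\fA_{\xi+1}\sub\fA$, the restriction $\mu\uhr\fA_{\xi+1}$ is then a strictly positive measure on $\fA_{\xi+1}$ extending ${\nu_\xi}\uhr\fA_\xi$ --- contradicting \ref{construction}(iv). Hence $\fA\notin\spm$.

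For the last part I would first note that $\fA_\xi\in\spm$ for every $\xi<\omega_2$: the algebra $\fA_0$ is trivial, hence in $\spm$, and since $0\notin S$ (as $\cf(0)\neq\omega$), property \ref{construction}(iii) applied with $\alpha=0$ yields a strictly positive measure on each $\fA_\xi$ with $0<\xi<\omega_2$. (This step is where the content of the construction --- and, through it, the non-reflection of $S$ --- actually enters.) Given now a subalgebra $\fB\sub\fA$ with $|\fB|\le\con$, each $b\in\fB$ lies in some $\fA_{\xi_b}$ with $\xi_b<\omega_2$, so, as $|\fB|\le\con<\cf(\omega_2)$, the ordinal $\delta=\sup_{b\in\fB}\xi_b$ is below $\omega_2$; since the algebras $\fA_\xi$ are increasing (and $\fA_\delta=\bigcup_{\eta<\delta}\fA_\eta$ if $\delta$ is a limit, by \ref{construction}(ii)), we get $\fB\sub\fA_\delta\in\spm$, whence $\fB\in\spm$ by restricting a strictly positive measure on $\fA_\delta$ to $\fB$. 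The only delicate point in all of this is the extension step in the second part: the $\diamondsuit_S$-guessing is useless when applied to $\mu$ itself, which is defined only on $\fA$, so one must first push $\mu$ out to a finitely additive measure on $Ba[\omega_2]$ and only then read off the coincidence $\mu\uhr\fA_\xi={\nu_\xi}\uhr\fA_\xi$ from $\fA_\xi\sub Ba^<[\xi]\cap\fA$; everything else is bookkeeping with \ref{construction}(i)--(iv).
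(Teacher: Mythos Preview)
Your argument is correct and, for the first claim ($\fA\notin\spm$), is essentially identical to the paper's: extend $\mu$ to a finitely additive probability on $Ba[\omega_2]$, use the $\diamondsuit_S$-sequence to catch its restriction at some $\xi\in S$, and contradict \ref{construction}(iv).

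The only genuine difference is in the second claim. You observe that once property \ref{construction}(iii) is granted in full, one may simply take $\alpha=0$ (or, to sidestep the awkwardness that $Ba^<[0]$ is the empty union, any fixed successor ordinal such as $\alpha=1$) and conclude immediately that every $\fA_\xi$ carries a strictly positive measure. The paper instead argues directly: for $\xi$ of cofinality $\omega_1$ it invokes Axiom~\ref{ax1} to find a club $F\sub\xi\setminus S$, and then builds a coherent tower of strictly positive measures along $F$ using (iii) only for consecutive elements of $F$. Your route is shorter because it treats (iii) as a black box; the paper's route is more explicit because it unpacks, in the proof of the theorem itself, exactly the transfinite extension along a club in $\xi\setminus S$ that is needed to verify (iii) at limit stages of cofinality $\omega_1$ (the paper's Construction merely says this step is ``easy to check''). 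In effect you are relocating the appeal to non-reflection from the theorem's proof back into the verification of (iii) in the construction, and you say so yourself; both placements are legitimate, and the mathematical content is the same.
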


\begin{proof}
Consider any probability measure $\mu$ on $\fA$. Let $\nu$ be any extension of $\mu$ to a probability measure
on $Ba[\omega_2]$. Then, by Theorem \ref{shelah},  $\nu_{|Ba^<[\xi]}=\nu_\xi$ for some $\xi\in S$.
It follows from \ref{construction}(iv) that $\mu$ is not strictly positive on $\fA_{\xi+1}$.
Hence $\fA\notin\spm$.

Let $\fB\sub\fA$ be an algebra with $|\fB|\le\omega_1=\con$; then $\fB\sub \fA_\xi$ for some $\xi<\omega_2$.
Therefore, to complete the proof it is enough to check that $\fA_\xi \in\spm$ for every  ordinal $\xi<\omega_2$ of cofinality $\omega_1$.

Let us fix  $\xi<\omega_2$ of cofinality $\omega_1$. Then $S\cap\xi$ is not stationary in $\xi$ so there is a set $F\sub\xi\sm S$
which is closed and unbounded in $\xi$. Then, using  \ref{construction}(iii),
we may define by induction on $\alpha\in F$ strictly positive measures $\mu_\alpha$ on $\fA_\alpha$ so that
$\mu_\alpha$ extends $\mu_\beta$ whenever $\beta,\alpha\in F$ and $\beta<\alpha$.
Now the common extension of those measures is strictly positive on  $\fA_\xi=\bigcup_{\alpha\in F}\fA_\alpha$.
Thus $\fA_\xi\in\spm$, and the proof is complete.
\end{proof}

\end{document}